\documentclass[11pt]{amsart}

\usepackage[T1]{fontenc}
\usepackage[utf8]{inputenc}
\usepackage{lmodern}
\usepackage{microtype}
\usepackage{mathtools}
\usepackage{amssymb}
\usepackage{enumitem}
\usepackage[hidelinks]{hyperref}

\hypersetup{
  pdftitle={A Counterexample to Teschner's Bondage-Number Conjecture},
  pdfauthor={Yousof Yavari},
  pdfsubject={Graph theory; domination number; bondage number},
  pdfkeywords={bondage number, domination number, Teschner's conjecture, cubic bipartite graph, counterexample}
}

\numberwithin{equation}{section}

\newtheorem{theorem}{Theorem}[section]
\newtheorem{lemma}[theorem]{Lemma}
\newtheorem{proposition}[theorem]{Proposition}

\newtheorem{conjecture}[theorem]{Conjecture}
\theoremstyle{definition}

\newcommand{\cD}{\mathcal{D}}

\title[A counterexample to Teschner's conjecture]{A Counterexample to Teschner's Bondage-Number Conjecture}

\author{Yousof Yavari}
\address{The University of British Columbia, Vancouver, British Columbia, Canada}
\email{yousofy@student.ubc.ca}

\subjclass[2020]{05C69, 05C85}
\keywords{bondage number, domination number, Teschner's conjecture, cubic bipartite graph, counterexample}
\date{August 4, 2026}

\begin{document}

\begin{abstract}
For a finite simple graph $G$ with at least one edge, the bondage number $b(G)$ is the
least number of edges whose deletion increases the domination number
$\gamma(G)$.  Teschner conjectured that
$b(G)\le \tfrac32\Delta(G)$ for every graph $G$.  We disprove this
conjecture by giving a connected cubic bipartite graph on eighteen
vertices with
\[
  \gamma(G)=6
  \qquad\text{and}\qquad
  b(G)=5.
\]
The domination number is established by a complete counting argument across
the bipartition.  An explicit five-edge deletion raises the domination
number from six to seven.  For the matching lower bound, we give an exact
finite certificate: the graph has 297 minimum dominating sets, and deleting
any one of its $\binom{27}{4}=17{,}550$ four-edge subsets leaves at least one
of those sets dominating.  The enumeration is deterministic, uses only exact
integer and set operations, and is reproduced by the complete
standard-library verifier included in the appendix.
\end{abstract}

\maketitle

\section{Introduction}

All graphs in this paper are finite, simple, and undirected.  A set
$D\subseteq V(G)$ is a \emph{dominating set} of a graph $G$ if every vertex
of $V(G)\setminus D$ has a neighbor in $D$.  The minimum cardinality of a
dominating set is the \emph{domination number} $\gamma(G)$.  Fink,
Jacobson, Kinch, and Roberts introduced the \emph{bondage number} $b(G)$ as
the minimum number of edges whose deletion increases the domination
number~\cite{FinkEtAl1990}.

Teschner proved the bound $b(G)\le \tfrac32\Delta(G)$ for graphs with
domination number at most three and proposed that the same inequality holds
without restriction~\cite{Teschner1995}. Subsequent work established the conjecture for broad classes of graphs without
settling it in full; see, for example, Gagarin and
Zverovich~\cite{GagarinZverovich2013}.

\begin{conjecture}[Teschner~\cite{Teschner1995}]\label{conj:Teschner}
Every finite simple graph $G$ with at least one edge satisfies
\begin{equation}\label{eq:Teschner}
  b(G)\le \frac32\Delta(G).
\end{equation}
\end{conjecture}

We give a counterexample in which every vertex has degree three.

\begin{theorem}\label{thm:main}
There exists a connected cubic bipartite graph $G$ such that
\[
  \gamma(G)=6
  \qquad\text{and}\qquad
  b(G)=5.
\]
Consequently,
\[
  b(G)=5>\frac92=\frac32\Delta(G),
\]
so Conjecture~\ref{conj:Teschner} is false.
\end{theorem}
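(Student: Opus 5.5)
The proof is constructive. We exhibit an explicit connected cubic bipartite graph $G$ on eighteen vertices, with bipartition $A=\{a_0,\dots,a_8\}$ and $B=\{b_0,\dots,b_8\}$ and $27$ edges given by a fixed list, and verify three facts: $\gamma(G)=6$, some five-edge set $F$ satisfies $\gamma(G-F)=7$, and $\gamma(G-F')=6$ for every four-edge set $F'$. Together these give $b(G)=5$, and the final inequality $5>9/2$ is immediate. Connectivity, regularity and bipartiteness are checked directly from the edge list.

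\emph{Step 1: $\gamma(G)=6$.} For the lower bound, let $D$ be a dominating set and put $D_A=D\cap A$, $D_B=D\cap B$. Every vertex of $A$ lies in $D_A$ or has a neighbour in $D_B$, so $|D_A|+3|D_B|\ge 9$. Symmetrically $|D_B|+3|D_A|\ge 9$. Adding gives $4|D|\ge 18$, so $|D|\ge 5$. To exclude $|D|=5$, note that the inequalities then force $(|D_A|,|D_B|)\in\{(2,3),(3,2)\}$ with near-tight counting. In particular the three vertices of $D_B$ must have pairwise disjoint neighbourhoods covering $A\setminus D_A$ almost exactly. We would check from the structure of $G$ (for instance, that no three vertices on one side have pairwise disjoint neighbourhoods in the required pattern, using a $4$-cycle-rich design) that this is impossible. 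The upper bound is a single explicit dominating set of size six.

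\emph{Step 2: $b(G)\le 5$.} We name a specific five-edge set $F$ and show that $G-F$ has no dominating set of size six. The natural choice is to delete edges so that some vertex has degree $\le 1$, or so that the private-neighbour structure of every minimum dominating set breaks. To certify $\gamma(G-F)=7$, we rerun the counting argument with degree-adjusted inequalities, where vertices touched by $F$ dominate fewer vertices. Any residual cases are finished by exhaustive enumeration of six-subsets, at most $\binom{18}{6}=18564$ of them.

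\emph{Step 3: $b(G)\ge 5$.} This is the main obstacle, since no clean structural argument is apparent. The plan is a finite certificate. First enumerate all minimum dominating sets of $G$ (there are $297$). Then, for each of the $\binom{27}{4}=17550$ four-edge subsets $F'$, exhibit one minimum dominating set $D$ that still dominates $G-F'$. This holds exactly when no deleted edge is the unique link from some vertex outside $D$ to $D$. For a fixed $D$, the edges that are \emph{critical} in this sense are determined once, so the check reduces to testing whether $F'$ avoids the critical-edge set of some $D$. This is a hitting-set verification done by exact set operations.

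I would try to compress Step 3 with symmetry, using the automorphism group of $G$ to reduce the four-edge subsets to orbit representatives. The fallback is the full deterministic enumeration, reproduced by the verifier in the appendix.
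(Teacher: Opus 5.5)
Your outline follows the paper's (bipartite counting for $\gamma$, an explicit five-edge deletion, an exhaustive four-edge certificate), but the decisive gap is that you never produce the graph. For an existence statement proved by example, the explicit edge list is the whole content. Steps 1--3 are verification recipes that cannot be executed, or known to succeed, until $G$ is written down. The figure $297$ you quote is a property of the paper's specific graph \eqref{eq:edge-set}, not of anything in your proposal. Step 2 is also not where the difficulty lies. For any edge $uv$ of a cubic graph, deleting the five edges incident with $u$ or $v$ isolates both. If $A'$ dominates $G-\{u,v\}$, then $A'\cup\{u\}$ dominates $G$, so $\gamma(G-F)=\gamma(G-\{u,v\})+2\ge\gamma(G)+1$. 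Thus $b(G)\le5$ for every cubic graph (the paper uses $u=0$, $v=6$), with no degree-adjusted counting or enumeration. Everything therefore hinges on finding a graph in which four deletions never suffice, which is exactly the part you leave unspecified.

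Even with the paper's graph in hand, your Step 3 test is unsound. You say $D$ still dominates $G-F'$ exactly when no edge of $F'$ is the unique link from some $x\notin D$ to $D$. That condition is necessary but not sufficient, because four deleted edges can contain an entire bundle $B_G(D,x)$ of size $2$ or $3$. For instance, take $D_0=\{0,1,2,3,14,15\}$ and $F'=\{(2,4),(3,4),(4,16),(6,7)\}$: no edge of $F'$ is a unique link to $D_0$, yet $F'$ isolates $4\notin D_0$. So your check accepting every $F'$ would not show $\gamma(G-F')=6$. You need the full criterion of Lemma~\ref{lem:bundle}, namely $B_G(D,x)\nsubseteq F'$ for every $x\notin D$.

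Step 1 also puts the tightness on the wrong side. If $|D_A|=2$ and $|D_B|=3$, the seven vertices of $A\setminus D_A$ must be dominated via the nine edges at $D_B$, which leaves slack two. The six vertices of $B\setminus D_B$, however, can be reached only by the six edges at the two vertices $a,a'$ of $D_A$. What is forced is therefore $N_G(a)\cap N_G(a')=\varnothing$ and $D_B=B\setminus(N_G(a)\cup N_G(a'))$. This reduces the problem to the finite list of disjoint pairs in Table~\ref{tab:disjoint-pairs}. The three vertices of $D_B$ need not have disjoint neighbourhoods. Your suggested obstruction also fails for the paper's graph, since $N_G(4)$, $N_G(7)$, $N_G(15)$ partition $Y$.
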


The proof of $\gamma(G)=6$ and the upper bound $b(G)\le5$ is entirely
hand-checkable.  The lower bound $b(G)\ge5$ is an exact finite computation.
We isolate the elementary certificate criterion used by that computation,
state all enumerated totals, and include the complete verifier in
Appendix~\ref{app:verifier}.

\section{Preliminaries}

For a vertex $v$ of a graph $G$, let
\[
  N_G(v)=\{u\in V(G):uv\in E(G)\}
\]
be the set of neighbors of $v$, called its \emph{open neighborhood}, and let
$N_G[v]=N_G(v)\cup\{v\}$ be its \emph{closed neighborhood}.  For
$D\subseteq V(G)$, put
\[
  N_G[D]=\bigcup_{v\in D}N_G[v].
\]
Thus $D$ dominates $G$ if and only if $N_G[D]=V(G)$.  If
$F\subseteq E(G)$, then $G-F$ denotes the subgraph obtained from $G$ by
deleting the edges of $F$.  For a finite set $S$, the notation
$\binom{S}{k}$ denotes the set of its $k$-element subsets.

For every graph $G$ with $E(G)\ne\varnothing$, its bondage number is
\begin{equation}\label{eq:bondage-definition}
  b(G)=\min\bigl\{|F|:F\subseteq E(G),\ 
  \gamma(G-F)>\gamma(G)\bigr\}.
\end{equation}

We record two elementary facts about domination.

\begin{lemma}\label{lem:basic-domination}
Let $G$ be a finite simple graph.
\begin{enumerate}[label=\textup{(\roman*)}]
\item If $F\subseteq F'\subseteq E(G)$, then
\[
  \gamma(G-F)\le \gamma(G-F').
\]
\item If $G_1,\ldots,G_r$ are the components of $G$, then
\[
  \gamma(G)=\sum_{i=1}^r\gamma(G_i).
\]
\end{enumerate}
\end{lemma}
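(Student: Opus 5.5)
We prove the two parts separately, working directly from the definition of domination as the covering condition $N_G[D]=V(G)$.

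\emph{Part (i).} Fix $F\subseteq F'\subseteq E(G)$. The graph $G-F'$ is a spanning subgraph of $G-F$, so every edge of $G-F'$ is also an edge of $G-F$. Hence for every vertex $v$,
\[
  N_{G-F'}[v]\subseteq N_{G-F}[v].
\]
Taking unions over $v\in D$ gives $N_{G-F'}[D]\subseteq N_{G-F}[D]$ for every $D\subseteq V(G)$. Now let $D$ be a minimum dominating set of $G-F'$. Then $V(G)=N_{G-F'}[D]\subseteq N_{G-F}[D]$, so $D$ also dominates $G-F$. Therefore $\gamma(G-F)\le |D|=\gamma(G-F')$. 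This is the monotonicity we will use to pass from five-edge to four-edge deletions.

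\emph{Part (ii).} Let $V_i=V(G_i)$. Since there are no edges between distinct components, $N_G[v]=N_{G_i}[v]\subseteq V_i$ for every $v\in V_i$.

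\emph{Upper bound.} For each $i$ choose a minimum dominating set $D_i$ of $G_i$, and put $D=\bigcup_i D_i$. Then
\[
  N_G[D]=\bigcup_i N_{G_i}[D_i]=\bigcup_i V_i=V(G),
\]
so $\gamma(G)\le\sum_i\gamma(G_i)$.

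\emph{Lower bound.} Let $D$ be a minimum dominating set of $G$ and set $D_i=D\cap V_i$. For a vertex $x\in V_i$, some $v\in D$ satisfies $x\in N_G[v]$. Closed neighborhoods stay inside their own component, so $v\in V_i$ and hence $v\in D_i$. It follows that $D_i$ dominates $G_i$, so $|D_i|\ge\gamma(G_i)$. The sets $D_i$ partition $D$, which gives
\[
  \gamma(G)=|D|=\sum_i|D_i|\ge\sum_i\gamma(G_i).
\]
Combining the two bounds yields equality.

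There is no real obstacle in either part. The only point needing care is the lower bound in (ii): it relies on closed neighborhoods being contained in their own component, which is what allows a dominating set of $G$ to be split componentwise.
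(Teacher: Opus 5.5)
Your proof is correct and follows the paper's argument: for (i), every dominating set of $G-F'$ still dominates the graph $G-F$, which has more edges; for (ii), you split a dominating set componentwise, carrying out in more detail the paper's one-line ``minimize independently over the components.'' One small correction to your side remark: the paper uses (i) to extend a deletion set of at most four edges to exactly four edges (and, with $F=\varnothing$, to get $\gamma(G-F)\ge\gamma(G)$), not to pass from five-edge to four-edge deletions.
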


\begin{proof}
For \textup{(i)}, every dominating set of $G-F'$ also dominates $G-F$,
because $G-F$ contains every edge of $G-F'$.  For \textup{(ii)}, a set
dominates $G$ if and only if its intersection with each component $G_i$
dominates $G_i$.  Minimizing independently over the components gives the
stated sum.
\end{proof}

The next lemma characterizes exactly when a fixed dominating set remains
dominating after edges are deleted.

\begin{lemma}[Bundle criterion]\label{lem:bundle}
Let $D$ be a dominating set of a graph $G$.  For each $x\in V(G)\setminus D$,
define
\begin{equation}\label{eq:bundle}
  B_G(D,x)=\{xd\in E(G):d\in D\}.
\end{equation}
Then, for every $F\subseteq E(G)$, the set $D$ dominates $G-F$ if and only if
\begin{equation}\label{eq:bundle-criterion}
  B_G(D,x)\nsubseteq F
  \qquad\text{for every }x\in V(G)\setminus D.
\end{equation}
Equivalently, $D$ fails to dominate $G-F$ if and only if
$B_G(D,x)\subseteq F$ for some $x\notin D$.
\end{lemma}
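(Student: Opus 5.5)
The plan is to unwind the definition of domination in $G-F$ vertex by vertex, using that $D$ is dominating in $G$ and that deleting edges never removes vertices. Fix $F\subseteq E(G)$. The vertices of $D$ are dominated by themselves in any spanning subgraph, and in particular in $G-F$. So $D$ dominates $G-F$ exactly when every $x\in V(G)\setminus D$ has a neighbor in $D$ within $G-F$.

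Next, for a fixed $x\notin D$, I will note that the edges of $G-F$ joining $x$ to $D$ are precisely the edges $xd\in E(G)$ with $d\in D$ and $xd\notin F$, i.e.\ the set $B_G(D,x)\setminus F$. Thus $x$ has a neighbor in $D$ in $G-F$ if and only if $B_G(D,x)\setminus F\neq\varnothing$, which is the same as $B_G(D,x)\nsubseteq F$. Quantifying over all $x\in V(G)\setminus D$ then gives \eqref{eq:bundle-criterion}. The ``equivalently'' clause is just its contrapositive, obtained by negating the universal statement.

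No step is a real obstacle; the only point needing care is the vertices of $D$ themselves, which impose no condition. I would also remark that the hypothesis that $D$ dominates $G$ is not actually needed for the equivalence. Without it, some $B_G(D,x)$ is empty, so $B_G(D,x)\subseteq F$ holds for every $F$ and the criterion correctly reports failure. The hypothesis simply makes each bundle nonempty, which is the situation in which the criterion will be applied to the $297$ minimum dominating sets.
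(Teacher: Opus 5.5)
Your proposal is correct and follows the paper's proof: vertices of $D$ dominate themselves, and each $x\notin D$ stays dominated exactly when some edge of $B_G(D,x)$ survives the deletion, i.e.\ when $B_G(D,x)\nsubseteq F$. Your side remark is also right: the paper uses the hypothesis that $D$ dominates $G$ only to note that each bundle is nonempty, and the equivalence does not actually depend on that.
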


\begin{proof}
Every vertex of $D$ remains dominated by itself after any edge deletion.
For $x\notin D$, the set $D$ dominates $x$ in $G-F$ precisely when at least
one edge joining $x$ to a vertex of $D$ remains.  Since $D$ dominates $G$,
the set $B_G(D,x)$ is nonempty, and such an edge remains precisely when
$B_G(D,x)$ is not contained in $F$.
\end{proof}

\section{The graph and its domination number}

Let
\[
  V(G)=\{0,1,\ldots,17\},
\]
and let $E(G)$ be the following set of 27 edges, where each pair denotes an
unordered edge:
\begin{equation}\label{eq:edge-set}
\begin{aligned}
E(G)=\{&(0,6),(0,10),(0,16),(1,8),(1,11),(1,12),\\
&(2,4),(2,5),(2,13),(3,4),(3,5),(3,9),\\
&(4,16),(5,11),(6,7),(6,13),(7,8),(7,14),\\
&(8,13),(9,10),(9,14),(10,15),(11,15),\\
&(12,15),(12,17),(14,17),(16,17)\}.
\end{aligned}
\end{equation}

\begin{proposition}\label{prop:structure}
The graph $G$ is connected, cubic, and bipartite, with bipartition
\begin{equation}\label{eq:bipartition}
\begin{aligned}
X&=\{0,1,4,5,7,9,13,15,17\},\\
Y&=\{2,3,6,8,10,11,12,14,16\}.
\end{aligned}
\end{equation}
In particular, $\Delta(G)=3$.
\end{proposition}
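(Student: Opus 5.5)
The plan is to verify the three properties directly from the edge list \eqref{eq:edge-set}, using the adjacency lists as the single organizing object. First I will tabulate $N_G(v)$ for every $v\in V(G)$ by scanning the 27 pairs. This gives
\[
\begin{aligned}
&N(0)=\{6,10,16\},\ N(1)=\{8,11,12\},\ N(2)=\{4,5,13\},\ N(3)=\{4,5,9\},\\
&N(4)=\{2,3,16\},\ N(5)=\{2,3,11\},\ N(6)=\{0,7,13\},\ N(7)=\{6,8,14\},\\
&N(8)=\{1,7,13\},\ N(9)=\{3,10,14\},\ N(10)=\{0,9,15\},\ N(11)=\{1,5,15\},\\
&N(12)=\{1,15,17\},\ N(13)=\{2,6,8\},\ N(14)=\{7,9,17\},\ N(15)=\{10,11,12\},\\
&N(16)=\{0,4,17\},\ N(17)=\{12,14,16\}.
\end{aligned}
\]
Every list has exactly three entries, so $G$ is cubic and $\Delta(G)=3$. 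A consistency check is the handshake count: $18\cdot 3/2=27$ matches the number of listed edges. To confirm that no pair is repeated, I will check that each list consists of three distinct vertices, none equal to $v$.

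For bipartiteness with the stated parts \eqref{eq:bipartition}, it suffices to check that every $x\in X$ has $N(x)\subseteq Y$. Inspecting the nine rows above for $x\in X$ confirms this. These rows account for $9\cdot 3=27$ edge incidences, which is all of $E(G)$. Hence every edge has exactly one end in $X$ and one in $Y$. Since $X\cap Y=\varnothing$ and $X\cup Y=V(G)$, this gives the bipartition. Checking the $Y$-rows is then automatic, but I will glance at them as a cross-check on transcription.

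For connectivity I will run a breadth-first search from $0$, recording layers:
\[
\{0\},\quad \{6,10,16\},\quad \{7,13,9,15,4,17\},\quad \{8,14,2,3,11,12\},\quad \{1,5\}.
\]
These exhaust all 18 vertices, so $G$ is connected. The only real obstacle in this proposition is bookkeeping: transcribing the adjacency lists correctly from \eqref{eq:edge-set}. The degree count against $27$ edges and the fact that the $X$-rows alone cover all $27$ incidences serve as internal checks against copying errors.
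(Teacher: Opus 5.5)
Your proposal is correct and follows essentially the same route as the paper: read the neighborhoods off the edge list to get cubicity and the $X$--$Y$ bipartition, then run a breadth-first search from $0$, which yields exactly the paper's layers. The only cosmetic difference is bookkeeping. You tabulate all eighteen neighborhoods and use the $27$-incidence count to exclude $Y$--$Y$ edges, while the paper lists only the $X$-neighborhoods, checks each edge directly, and checks that every $Y$-vertex appears three times.
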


\begin{proof}
Every edge in \eqref{eq:edge-set} has one endpoint in $X$ and one endpoint
in $Y$.  The neighborhoods of the vertices in $X$ are displayed in
Table~\ref{tab:neighborhoods}.  Every row has three entries, and every
vertex of $Y$ occurs in exactly three of the displayed neighborhoods.
Thus every vertex of $G$ has degree three.

Starting from $0$, the distance layers are
\[
\begin{array}{ll}
L_0=\{0\},\quad L_1=\{6,10,16\}, &
L_2=\{4,7,9,13,15,17\},\\[2pt]
L_3=\{2,3,8,11,12,14\}, &
L_4=\{1,5\}.
\end{array}
\]
Their union is $V(G)$, so $G$ is connected.
\end{proof}

\begin{table}[ht]
\caption{Neighborhoods of the vertices in $X$.}
\label{tab:neighborhoods}
\centering
\small
\begin{tabular}{c|c@{\qquad}c|c@{\qquad}c|c}
$x$ & $N_G(x)$ & $x$ & $N_G(x)$ & $x$ & $N_G(x)$\\
\hline
$0$  & $\{6,10,16\}$ & $1$  & $\{8,11,12\}$  & $4$  & $\{2,3,16\}$\\
$5$  & $\{2,3,11\}$  & $7$  & $\{6,8,14\}$   & $9$  & $\{3,10,14\}$\\
$13$ & $\{2,6,8\}$   & $15$ & $\{10,11,12\}$ & $17$ & $\{12,14,16\}$
\end{tabular}
\end{table}

\begin{proposition}\label{prop:gamma}
The graph $G$ has domination number
\[
  \gamma(G)=6.
\]
\end{proposition}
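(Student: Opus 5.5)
We bound $\gamma(G)$ from above by an explicit dominating set and from below by counting domination separately on each side of the bipartition $X\cup Y$ of Proposition~\ref{prop:structure}.

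\textbf{Upper bound.} Take
\[
  D_0=\{4,7,15\}\cup\{6,11,14\}.
\]
Using Table~\ref{tab:neighborhoods}, the $X$-vertices $4,7,15$ have pairwise disjoint neighborhoods $\{2,3,16\}$, $\{6,8,14\}$, $\{10,11,12\}$, whose union is $Y$. The $Y$-vertices $6,11,14$ have neighborhoods $\{0,7,13\}$, $\{1,5,15\}$, $\{7,9,17\}$, which cover the remaining $X$-vertices $0,1,5,9,13,17$. Hence $N_G[D_0]=V(G)$ and $\gamma(G)\le 6$.

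\textbf{Lower bound.} Suppose $D$ is a dominating set with $|D|=5$ (a smaller one can be padded to size $5$). Write $a=|D\cap X|$ and $b=|D\cap Y|$, so $a+b=5$. Since $G$ is bipartite and cubic, a vertex of $D\cap X$ dominates only itself in $X$, while a vertex of $D\cap Y$ dominates exactly three vertices of $X$. Covering the $9$ vertices of $X$ therefore requires $a+3b\ge 9$, and symmetrically covering $Y$ requires $b+3a\ge 9$. With $a+b=5$ these read $5+2b\ge 9$ and $5+2a\ge 9$, so $a,b\ge 2$, and thus $\{a,b\}=\{2,3\}$.

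Consider $(a,b)=(2,3)$. The count on $Y$ is then tight: $b+3a=9$. So $D\cap Y$ together with $N(x_1)$ and $N(x_2)$, where $D\cap X=\{x_1,x_2\}$, must partition $Y$. This forces $N(x_1)\cap N(x_2)=\varnothing$, and $D\cap Y$ is then determined as $Y\setminus(N(x_1)\cup N(x_2))$.

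From Table~\ref{tab:neighborhoods} we list the pairs of $X$-vertices with disjoint neighborhoods. For each such pair we take the forced triple $D\cap Y$ and check whether its neighbors cover $X\setminus\{x_1,x_2\}$. The covering count on $X$ has slack only $2$, so each candidate fails quickly. The case $(a,b)=(3,2)$ is symmetric: it uses the neighborhoods of $Y$-vertices, read off from the edge list, and pairs in $Y$ with disjoint neighborhoods. Showing that no candidate survives gives $\gamma(G)\ge 6$.

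The main obstacle is this finite case check. It is a bounded enumeration over at most $\binom92$ pairs on each side, but it must be carried out carefully against the table; the reduction to tight partitions is what keeps it small enough to do by hand.
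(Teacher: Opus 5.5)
Your approach is the paper's approach, but the lower bound is incomplete: you describe the finite case check without carrying it out, and that check is the whole graph-specific content of $\gamma(G)\ge 6$.

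The parts you do write out are correct. Your set $\{4,6,7,11,14,15\}$ does dominate $G$; the paper uses $\{0,1,2,3,14,15\}$, but either works. Your reduction is exactly the paper's. The two covering inequalities force $\{a,b\}=\{2,3\}$. Tightness of the count on the side opposite the two selected vertices then forces $N(x_1)\cap N(x_2)=\varnothing$ and $D\cap Y=Y\setminus(N(x_1)\cup N(x_2))$.

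Nothing up to that point uses the particular edge set. The reduction is valid for every cubic bipartite graph on $18$ vertices, and some of those do have domination number $5$. For example, take two vertices with disjoint neighbourhoods whose complementary triple covers the other seven vertices; such a graph is easy to build. Likewise, ``slack only $2$, so each candidate fails quickly'' is not an argument. Slack $2$ is exactly what leaves room for such a cover.

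To finish, you must list the candidate pairs and exhibit a failure for each. There are $12$ pairs in $X$ with disjoint neighbourhoods:
$\{0,1\},\{0,5\},\{1,4\},\{1,9\},\{4,7\},\{4,15\},\{5,7\},\{5,17\},\{7,15\},\{9,13\},\{13,15\},\{13,17\}$.
There are also $12$ in $Y$:
$\{2,10\},\{2,12\},\{2,14\},\{3,6\},\{3,8\},\{3,12\},\{6,11\},\{6,12\},\{8,10\},\{8,16\},\{11,14\},\{11,16\}$.

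In each of the $24$ cases the forced triple leaves some vertex on the pair's own side undominated. For instance, $\{0,1\}$ forces $\{2,3,14\}$, whose neighbourhoods $\{4,5,13\}$, $\{4,5,9\}$, $\{7,9,17\}$ miss $15$. The paper records all $24$ forced triples and missed vertices in Table~\ref{tab:disjoint-pairs}. With that table, or your own equivalent verification, your argument is complete.
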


\begin{proof}
It suffices first to rule out a dominating set of cardinality five, because
any smaller dominating set could be enlarged to one of cardinality five.
Suppose that $D$ is a dominating five-set, and put
\[
  a=|D\cap X|.
\]
The $9-a$ vertices of $X\setminus D$ can be dominated only by the
$5-a$ selected vertices of $Y$, each of degree three.  Similarly, the
$9-(5-a)=4+a$ vertices of $Y\setminus D$ can be dominated only by the $a$
selected vertices of $X$.  Consequently,
\begin{equation}\label{eq:bipartite-counts}
  9-a\le3(5-a),
  \qquad
  4+a\le3a.
\end{equation}
The first inequality gives $a\le3$, and the second gives $a\ge2$.  Thus
$a\in\{2,3\}$.  Let $Z$ be the part in \eqref{eq:bipartition} containing
exactly two vertices of $D$, and let $W$ be the other part.  Write
\[
  D\cap Z=\{z,z'\}.
\]
The set $W\setminus D$ has six vertices.  Since vertices within the same
part are nonadjacent, every vertex of $W\setminus D$ must be adjacent to
$z$ or $z'$.  Hence
\[
  W\setminus D\subseteq N_G(z)\cup N_G(z').
\]
Both $N_G(z)$ and $N_G(z')$ are three-element subsets of $W$, so
\[
  6=|W\setminus D|
  \le |N_G(z)\cup N_G(z')|
  \le |N_G(z)|+|N_G(z')|=6.
\]
The inclusion and equality of cardinalities imply
$W\setminus D=N_G(z)\cup N_G(z')$.  Moreover, equality in the final union
bound implies $N_G(z)\cap N_G(z')=\varnothing$.  Taking complements in
$W$ therefore gives
\begin{equation}\label{eq:forced-pair}
  N_G(z)\cap N_G(z')=\varnothing
  \qquad\text{and}\qquad
  D\cap W=W\setminus\bigl(N_G(z)\cup N_G(z')\bigr).
\end{equation}

Comparing the neighborhoods determined by \eqref{eq:edge-set} gives the
exhaustive list in Table~\ref{tab:disjoint-pairs}.  The left half treats
$Z=X$ and $W=Y$, while the right half treats $Z=Y$ and $W=X$.  In every
row, the final entry in the corresponding half is a vertex of
$Z\setminus\{z,z'\}$ with no neighbor in the forced set $D\cap W$.
That vertex is outside $D$ and is not dominated by $D$, a contradiction.
Hence no five-set dominates $G$.

Finally, the set
\begin{equation}\label{eq:dominating-six}
  D_0=\{0,1,2,3,14,15\}
\end{equation}
dominates $G$.  Indeed, the vertices outside $D_0$ are dominated as
witnessed below, where each relation $u\sim v$ means that the vertex $u$ is
adjacent to, and therefore dominated by, the selected vertex $v$:
\[
\begin{gathered}
4\sim2,\quad 5\sim2,\quad 6\sim0,\quad 7\sim14,
\quad 8\sim1,\quad 9\sim3,\\
10\sim0,\quad 11\sim1,\quad 12\sim1,\quad 13\sim2,
\quad 16\sim0,\quad 17\sim14.
\end{gathered}
\]
Therefore $\gamma(G)\le6$, while the preceding argument gives
$\gamma(G)\ge6$.
\end{proof}

\begin{table}[ht]
\caption{All pairs in each bipartition class with disjoint neighborhoods.}
\label{tab:disjoint-pairs}
\centering
\scriptsize
\setlength{\tabcolsep}{3pt}
\begin{tabular}{c|c|c@{\qquad}c|c|c}
\multicolumn{3}{c}{Two selected vertices in $X$} &
\multicolumn{3}{c}{Two selected vertices in $Y$}\\
$\{z,z'\}$ & Forced $D\cap Y$ & Missed in $X$ &
$\{z,z'\}$ & Forced $D\cap X$ & Missed in $Y$\\
\hline
$\{0,1\}$   & $\{2,3,14\}$    & $15$ & $\{2,10\}$  & $\{1,7,17\}$  & $3$\\
$\{0,5\}$   & $\{8,12,14\}$   & $4$  & $\{2,12\}$  & $\{0,7,9\}$   & $11$\\
$\{1,4\}$   & $\{6,10,14\}$   & $5$  & $\{2,14\}$  & $\{0,1,15\}$  & $3$\\
$\{1,9\}$   & $\{2,6,16\}$    & $15$ & $\{3,6\}$   & $\{1,15,17\}$ & $2$\\
$\{4,7\}$   & $\{10,11,12\}$ & $13$ & $\{3,8\}$   & $\{0,15,17\}$ & $2$\\
$\{4,15\}$  & $\{6,8,14\}$    & $5$  & $\{3,12\}$  & $\{0,7,13\}$  & $11$\\
$\{5,7\}$   & $\{10,12,16\}$ & $13$ & $\{6,11\}$  & $\{4,9,17\}$  & $8$\\
$\{5,17\}$  & $\{6,8,10\}$    & $4$  & $\{6,12\}$  & $\{4,5,9\}$   & $8$\\
$\{7,15\}$  & $\{2,3,16\}$    & $1$  & $\{8,10\}$  & $\{4,5,17\}$  & $6$\\
$\{9,13\}$  & $\{11,12,16\}$ & $7$  & $\{8,16\}$  & $\{5,9,15\}$  & $6$\\
$\{13,15\}$ & $\{3,14,16\}$   & $1$  & $\{11,14\}$ & $\{0,4,13\}$  & $12$\\
$\{13,17\}$ & $\{3,10,11\}$   & $7$  & $\{11,16\}$ & $\{7,9,13\}$  & $12$
\end{tabular}
\end{table}

\section{The bondage number}

We first give an explicit edge set that increases the domination number.

\begin{proposition}\label{prop:upper}
The graph $G$ satisfies $b(G)\le5$.
\end{proposition}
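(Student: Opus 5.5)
The plan is to exhibit an explicit set $F\subseteq E(G)$ with $|F|=5$ and to prove $\gamma(G-F)\ge7$. Since $\gamma(G)=6$ by Proposition~\ref{prop:gamma}, this gives $b(G)\le5$ directly from \eqref{eq:bondage-definition}.

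\textbf{Reduction to minimum dominating sets of $G$.} Every dominating set of $G-F$ also dominates $G$, because $G-F$ is a spanning subgraph of $G$. So any dominating six-set of $G-F$ would be a minimum dominating set of $G$. It therefore suffices to show that no minimum dominating set $D$ of $G$ survives the deletion. By Lemma~\ref{lem:bundle}, this means showing that for every dominating six-set $D$ of $G$ there is a vertex $x\notin D$ with $B_G(D,x)\subseteq F$. Enumerating all such $D$ by hand is impractical, so I would not list them. Instead I would derive structure by the counting argument of Proposition~\ref{prop:gamma}, applied directly in $G-F$.

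\textbf{Choice of $F$.} I would take $F$ to consist of the three edges at one vertex $v$, which isolates $v$, together with two further edges $e_1,e_2$. These are chosen (by a small search) so that the degree deficits concentrate on the part of the bipartition where the counting is tight. The isolated vertex $v$ must lie in every dominating set of $G-F$. The remaining five vertices of $D$ must then dominate $H=(G-v)-\{e_1,e_2\}$, a bipartite graph on $17$ vertices.

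\textbf{Counting in $H$.} With $a=|D\cap X|$, I would redo the bipartite counts \eqref{eq:bipartite-counts}, now with the degrees of $H$. Some neighbours of $v$ and the endpoints of $e_1,e_2$ have degree $2$. This pins $a$ to one or two values and forces near-equality in a union bound, exactly as in the derivation of \eqref{eq:forced-pair}. As there, the argument reduces to pairs (or triples) of selected vertices with disjoint neighbourhoods in $H$ covering a prescribed set. Such pairs can be read off from Table~\ref{tab:neighborhoods} after adjusting for the deleted edges, giving a table analogous to Table~\ref{tab:disjoint-pairs}. In each row I would exhibit an undominated vertex, which yields the contradiction.

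\textbf{Main obstacle.} The hard step is the balanced case, presumably $a=3$ among the five remaining vertices. Here the counting slack is largest, so equality is not forced and the case split is longest. For that case I would first try to choose $v$ and $e_1,e_2$ so that the two degree-deficient vertices lie in the part containing three selected vertices. This makes the count tight there as well and forces disjoint neighbourhoods again.

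If this hand analysis remains unwieldy, the fallback is the bundle criterion. I would list the $297$ minimum dominating sets of $G$ and check, for each, that some bundle $B_G(D,x)$ lies in $F$. This is a finite check of the same kind used for the lower bound.
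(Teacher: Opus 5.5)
Your opening reduction is correct: a dominating set of $G-F$ also dominates $G$, so it suffices that no member of $\cD$ survives. But the proposal never produces the witness the statement requires. The claim $b(G)\le5$ is an existence claim, so its entire content is an explicit five-edge set $F$ together with a proof that $\gamma(G-F)\ge7$.

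You leave $e_1,e_2$ to ``a small search,'' and you leave the disjoint-pair table and the undominated vertices unspecified. You also acknowledge that the case $a=3$ may not close. Even your fallback, the bundle check over the $297$ sets in $\cD$, presupposes a fixed $F$. As it stands nothing has been verified, so this is a search strategy rather than a proof.

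The missing idea is to spend $e_1,e_2$ on the two remaining edges at a neighbor $u$ of $v$, that is, to delete all five edges meeting the edge $uv$. Your template does contain this choice, for example $v=0$, $u=6$, $F=\{(0,6),(0,10),(0,16),(6,7),(6,13)\}$, which is exactly the set the paper uses. However, you describe the endpoints of $e_1,e_2$ as dropping to degree $2$ in $H$, so you are not exploiting it.

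With this $F$, both $0$ and $6$ are isolated, so Lemma~\ref{lem:basic-domination}\textup{(ii)} gives $\gamma(G-F)=2+\gamma(G-\{0,6\})$. Suppose a set $A'$ of at most four vertices dominated $G-\{0,6\}$. Then $A'\cup\{0\}$ would dominate $G$, because $0$ dominates itself and $6$. This contradicts Proposition~\ref{prop:gamma}, so $\gamma(G-F)\ge7$.

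This argument needs no bipartite counting, no case split on $a$, and no search. It is the standard bound $b(G)\le\deg u+\deg v-1$ for adjacent $u,v$, which already gives $b(G)\le5$ for every cubic graph.
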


\begin{proof}
Delete the five-edge set
\begin{equation}\label{eq:F5}
  F_5=\{(0,6),(0,10),(0,16),(6,7),(6,13)\}.
\end{equation}
These are precisely the edges incident with at least one of the adjacent
vertices $0$ and $6$, so both vertices are isolated in $G-F_5$.  Let
\[
  H=G-\{0,6\}
\]
be the induced subgraph on the other sixteen vertices.  The set
\begin{equation}\label{eq:H-dominating-five}
  A=\{1,2,4,10,14\}
\end{equation}
dominates $H$: using the same notation, the vertices of
$V(H)\setminus A$ are dominated as witnessed by
\[
\begin{gathered}
3\sim4,\quad 5\sim2,\quad 7\sim14,\quad 8\sim1,
\quad 9\sim10,\quad 11\sim1,\\
12\sim1,\quad 13\sim2,\quad 15\sim10,
\quad 16\sim4,\quad 17\sim14.
\end{gathered}
\]
Thus $\gamma(H)\le5$.

If a four-set $A'$ dominated $H$, then $A'\cup\{0\}$ would dominate $G$:
the set $A'$ would dominate every vertex other than $0$ and $6$, while the
selected vertex $0$ would dominate itself and its neighbor $6$.  This would
contradict Proposition~\ref{prop:gamma}.  Hence $\gamma(H)=5$.

The graph $G-F_5$ is the disjoint union of $H$ and two isolated vertices.
Lemma~\ref{lem:basic-domination}\textup{(ii)} therefore gives
\[
  \gamma(G-F_5)=\gamma(H)+1+1=7>6=\gamma(G).
\]
It follows from \eqref{eq:bondage-definition} that $b(G)\le5$.
\end{proof}

We now certify that four deleted edges never suffice.  The proof is an exact
exhaustive enumeration, but the mathematical test used in each case is
precisely Lemma~\ref{lem:bundle}.

\begin{proposition}[Exact four-edge certificate]\label{prop:four-edge}
For every $F\in\binom{E(G)}{4}$,
\[
  \gamma(G-F)=6.
\]
\end{proposition}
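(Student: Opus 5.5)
The plan is to reduce Proposition~\ref{prop:four-edge} to a covering statement about minimum dominating sets and check it with Lemma~\ref{lem:bundle}. By Lemma~\ref{lem:basic-domination}\textup{(i)} with $F=\varnothing$, we have $\gamma(G-F)\ge\gamma(G)=6$ for every $F$. So it suffices to show that for every four-edge set $F$ some dominating six-set of $G$ still dominates $G-F$. Let $\cD$ be the family of all minimum dominating sets of $G$. Its elements are six-sets, which can be enumerated using the counting structure from Proposition~\ref{prop:gamma}; the abstract reports $|\cD|=297$.

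For each $D\in\cD$, Lemma~\ref{lem:bundle} gives the edge sets that $D$ fails to survive. These are exactly the sets $F$ that contain some bundle $B_G(D,x)$ with $x\notin D$. Call $D$ \emph{fragile at $x$} if $|B_G(D,x)|\le4$; every bundle has size at most $3$, so this always holds. Thus $D$ is killed by $F$ if and only if $F$ swallows one of the twelve bundles of $D$. The statement then becomes the following claim: for every $F\in\binom{E(G)}{4}$ there is some $D\in\cD$ none of whose bundles $B_G(D,x)$ lies inside $F$.

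I would verify this claim by exhaustive but structured checking. For each $D$, record the list of its minimal bundles, i.e.\ the inclusion-minimal sets among $\{B_G(D,x)\}$. A four-set $F$ defeats $D$ exactly when $F$ contains one of them. Most bundles of a six-set in a cubic graph on eighteen vertices are singletons, since private neighbors typically have one edge into $D$. So $D$ is defeated by $F$ essentially whenever $F$ meets the set $S(D)$ of edges forming singleton bundles, or contains a larger bundle entirely.

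A hand-level shortcut is available: if sets $D_1,\dots,D_k\in\cD$ have pairwise disjoint ``defeating supports'' $S(D_i)$, with larger bundles handled separately, then any four-set can defeat at most four of them. In that case five such sets would already certify the claim. I would first search for five members of $\cD$ whose singleton-bundle edge sets $S(D_i)$ are pairwise disjoint and whose multi-edge bundles are handled by the same disjointness.

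This is unlikely to exist, since $27$ edges cannot host five disjoint supports of roughly twelve edges each. So the fallback is the deterministic enumeration over all $\binom{27}{4}=17{,}550$ sets $F$. For each $F$ we search $\cD$ for a survivor using the bundle test, which involves only set containment. This test is exact by Lemma~\ref{lem:bundle}, so the computation is a proof.

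The main obstacle is making the certificate trustworthy and compact rather than conceptual. This has two parts. First, $\cD$ must be enumerated completely; any surviving six-set suffices, so completeness is not actually needed for correctness, only that each listed set is genuinely dominating. Second, the covering check must be exhaustive. I would present it by recording, for each $F$, a witness $D\in\cD$. The reader can then verify each witness independently by checking the twelve bundle containments, with the verifier in Appendix~\ref{app:verifier} reproducing the search.

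Combining this with Proposition~\ref{prop:upper} then gives $b(G)=5$.
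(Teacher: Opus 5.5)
Your proposal is correct and is essentially the paper's proof. The paper gets $\gamma(G-F)\ge 6$ from Lemma~\ref{lem:basic-domination}\textup{(i)}, and $\gamma(G-F)\le 6$ by checking all $\binom{27}{4}=17{,}550$ four-edge sets against $\cD$ with the bundle test of Lemma~\ref{lem:bundle}; its reported $\min_F s(F)=1$ is the same certificate as your per-$F$ witness, and you are right that correctness needs only that each listed set dominates, not that $\cD$ is complete.

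One remark to sharpen: the disjoint-supports shortcut is impossible, not just unlikely. The twelve bundles of any $D\in\cD$ have total size $18-2|E(G[D])|\le 18$, so at least six of them are singletons. Five pairwise disjoint sets $S(D_i)$ would then need at least $30>27$ edges, so the enumeration really is the whole argument, exactly as in the paper.
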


\begin{proof}
Let
\begin{equation}\label{eq:minimum-set}
  \cD=\{D\in\tbinom{V(G)}6:N_G[D]=V(G)\}.
\end{equation}
By Proposition~\ref{prop:gamma}, the members of $\cD$ are exactly the
minimum dominating sets of $G$.  The verifier in
Appendix~\ref{app:verifier} performs the following exhaustive computation.

First, it tests all
\[
  \binom{18}{6}=18{,}564
\]
six-element subsets of $V(G)$ and obtains
\begin{equation}\label{eq:number-minimum-sets}
  |\cD|=297.
\end{equation}
For each $D\in\cD$ and $x\notin D$, it then constructs the bundle
$B_G(D,x)$ from \eqref{eq:bundle}.  Finally, for each of the
\[
  \binom{27}{4}=17{,}550
\]
sets $F\in\binom{E(G)}4$, it computes
\begin{equation}\label{eq:survivor-count}
  s(F)=\bigl|\{D\in\cD:
  B_G(D,x)\nsubseteq F\text{ for every }x\notin D\}\bigr|.
\end{equation}
For each $F$, the verifier evaluates $s(F)$ by testing, for every
$D\in\cD$, whether the condition $B_G(D,x)\nsubseteq F$ from
Lemma~\ref{lem:bundle} holds for every $x\notin D$.  The exhaustive output is
\begin{equation}\label{eq:minimum-survivors}
  \min_{F\in\binom{E(G)}4}s(F)=1.
\end{equation}
In particular, every four-edge set leaves at least one $D\in\cD$ dominating
$G-F$, by Lemma~\ref{lem:bundle}.  Hence $\gamma(G-F)\le6$.
Lemma~\ref{lem:basic-domination}\textup{(i)} and
Proposition~\ref{prop:gamma} give the reverse inequality
$\gamma(G-F)\ge\gamma(G)=6$.  Therefore $\gamma(G-F)=6$.
\end{proof}

\begin{proof}[Proof of Theorem~\ref{thm:main}]
Proposition~\ref{prop:upper} gives $b(G)\le5$.  Suppose that some set
$F\subseteq E(G)$ with $|F|\le4$ increased the domination number.  Extend
$F$ to a four-edge set $F'\supseteq F$, taking $F'=F$ when $|F|=4$.
Lemma~\ref{lem:basic-domination}\textup{(i)} then gives
\[
  \gamma(G-F')\ge\gamma(G-F)>\gamma(G),
\]
contradicting Proposition~\ref{prop:four-edge}.  Thus deleting at most four
edges never increases the domination number, so $b(G)\ge5$ and consequently
$b(G)=5$.

By Proposition~\ref{prop:structure}, $\Delta(G)=3$.  Therefore
\[
  b(G)=5>\frac92=\frac32\Delta(G),
\]
which disproves Conjecture~\ref{conj:Teschner}.
\end{proof}

\appendix

\section{Exact verifier}\label{app:verifier}

The following Python~3 program is the complete verifier used in
Proposition~\ref{prop:four-edge}.  It represents vertices and edges by the
explicit finite sets appearing in the proof.  Running it with the standard
Python interpreter prints the values in \eqref{eq:number-minimum-sets} and
\eqref{eq:minimum-survivors} and terminates without error.

\begingroup
\footnotesize
\begin{verbatim}
from itertools import combinations

V = tuple(range(18))
E = (
    (0, 6), (0, 10), (0, 16),
    (1, 8), (1, 11), (1, 12),
    (2, 4), (2, 5), (2, 13),
    (3, 4), (3, 5), (3, 9),
    (4, 16), (5, 11),
    (6, 7), (6, 13),
    (7, 8), (7, 14),
    (8, 13),
    (9, 10), (9, 14),
    (10, 15), (11, 15),
    (12, 15), (12, 17),
    (14, 17), (16, 17),
)

neighbors = {v: set() for v in V}
for u, v in E:
    neighbors[u].add(v)
    neighbors[v].add(u)


def dominates(D):
    """Return whether D dominates G."""
    return all(v in D or neighbors[v] & D for v in V)


# Enumerate all six-vertex dominating sets.
minimum_sets = []
for choice in combinations(V, 6):
    D = frozenset(choice)
    if dominates(D):
        minimum_sets.append(D)

if len(minimum_sets) != 297:
    raise RuntimeError("unexpected number of minimum dominating sets")


def edge_bundle(D, x):
    """Return B_G(D,x) as a set of edges."""
    return frozenset(
        edge for edge in E
        if (edge[0] == x and edge[1] in D)
        or (edge[1] == x and edge[0] in D)
    )


# Store B_G(D,x) for every minimum set D and every x outside D.
bundles = [
    tuple(edge_bundle(D, x) for x in V if x not in D)
    for D in minimum_sets
]


def survives(D_bundles, F):
    """Return whether D still dominates after deleting F."""
    return all(not bundle.issubset(F) for bundle in D_bundles)


checked = 0
minimum_survivors = len(minimum_sets)

for four_edges in combinations(E, 4):
    F = frozenset(four_edges)
    survivors = sum(
        survives(D_bundles, F) for D_bundles in bundles
    )
    if survivors == 0:
        raise RuntimeError(
            "a four-edge deletion destroys every minimum set"
        )
    minimum_survivors = min(minimum_survivors, survivors)
    checked += 1

if checked != 17_550:
    raise RuntimeError("not all four-edge sets were checked")
if minimum_survivors != 1:
    raise RuntimeError("unexpected survivor minimum")

print("minimum dominating sets:", len(minimum_sets))
print("four-edge sets checked:", checked)
print("minimum surviving six-sets:", minimum_survivors)
\end{verbatim}
\endgroup

\section*{Acknowledgements}

The author thanks Eric Hou
(\href{mailto:yhou15@student.ubc.ca}
{\texttt{yhou15@student.ubc.ca}})
for independently verifying the proof and for suggesting modifications and
improvements.

\section*{Generative-AI disclosure}

The counterexample presented in this manuscript was generated using OpenAI's
GPT-5.6 Sol with the \texttt{max} reasoning setting
(``GPT-5.6 Sol Max'') after Yousof Yavari prompted the model to solve the
question addressed in this manuscript.  Yousof Yavari subsequently revised
and edited the proof, added further details, and clarified
its exposition. Yousof Yavari is the sole
author and takes full responsibility for every definition, calculation,
logical inference, computational result, and conclusion in the manuscript.

\end{document}